\renewcommand{\epsilon}{\varepsilon}
\numberwithin{equation}{section}
\newtheoremstyle{thmlemcorr}{10pt}{10pt}{\itshape}{}{\bfseries}{.}{10pt}{{\thmname{#1}\thmnumber{ #2}\thmnote{ (#3)}}}
\newtheoremstyle{thmlemcorr*}{10pt}{10pt}{\itshape}{}{\bfseries}{.}\newline{{\thmname{#1}\thmnumber{ #2}\thmnote{ (#3)}}}
\newtheoremstyle{defi}{10pt}{10pt}{\itshape}{}{\bfseries}{.}{10pt}{{\thmname{#1}\thmnumber{ #2}\thmnote{ (#3)}}}
\newtheoremstyle{remexample}{10pt}{10pt}{}{}{\bfseries}{.}{10pt}{{\thmname{#1}\thmnumber{ #2}\thmnote{ (#3)}}}
\newtheoremstyle{ass}{10pt}{10pt}{}{}{\bfseries}{.}{10pt}{{\thmname{#1}\thmnumber{ A#2}\thmnote{ (#3)}}}
\theoremstyle{thmlemcorr}
\newtheorem{theorem}{Theorem}
\numberwithin{theorem}{section}
\newtheorem{lemma}[theorem]{Lemma}
\newtheorem{corollary}[theorem]{Corollary}
\newtheorem{proposition}[theorem]{Proposition}
\theoremstyle{thmlemcorr*}
\newtheorem{theorem*}{Theorem}
\newtheorem{lemma*}[theorem]{Lemma}
\newtheorem{corollary*}[theorem]{Corollary}
\newtheorem{proposition*}[theorem]{Proposition}
\newtheorem{problem*}[theorem]{Problem}
\newtheorem{conjecture*}[theorem]{Conjecture}
\theoremstyle{defi}
\theoremstyle{remexample}
\newtheorem{remark}[theorem]{Remark}
\theoremstyle{ass}
\newcommand{\Acal}{\mathcal{A}}
\newcommand{\Fcal}{\mathcal{F}}
\newcommand{\Ical}{\mathcal{I}}
\newcommand{\Pcal}{\mathcal{P}}
\newcommand{\Ucal}{\mathcal{U}}
\newcommand{\Vcal}{\mathcal{V}}
\newcommand{\Abb}{\mathbb{A}}
\newcommand{\Ibb}{\mathbb{I}}
\newcommand{\Pbb}{\mathbb{P}}
\newcommand{\Qbb}{\mathbb{Q}}
\newcommand{\Sbb}{\mathbb{S}}
\DeclareMathOperator{\range}{range}
\DeclareMathOperator{\Span}{span}
\DeclareMathOperator{\divergerg}{div}
\DeclareMathOperator{\curl}{curl}
\DeclareMathOperator{\rank}{rank}
\newcommand{\set}[2]{\left\{\, #1 \ \ \textup{:}\ \ #2 \,\right\}}
\newcommand{\norm}[1]{\|#1\|}
\newcommand{\normb}[1]{\bigl\|#1\bigr\|}
\newcommand{\abs}[1]{|#1|}
\newcommand{\absb}[1]{\bigl|#1\bigr|}
\newcommand{\dd}{\;\mathrm{d}}
\newcommand{\N}{\mathbb{N}}
\newcommand{\R}{\mathbb{R}}
\newcommand{\weakly}{\rightharpoonup}
\newcommand{\Lin}{\mathrm{Lin}}
\newcommand{\eps}{\epsilon}
\newcommand{\sbullet}{\begin{picture}(1,1)(-0.5,-2)\circle*{2}\end{picture}}
\newcommand{\frarg}{\,\sbullet\,}
\title[Thin-film $\Gamma$-limit in micromagnetics]{Another approach to the thin-film $\Gamma$-limit of the 
micromagnetic free energy \\in the regime of small samples}
\author{Carolin Kreisbeck}
\address{Departamento de Matem\'{a}tica and Centro de Matem\'{a}tica e Aplica\c{c}o\~{e}s, Faculdade de Ci\^{e}ncias e Tecnologia, 
Universidade Nova de Lisboa, Quinta da Torre, 2829-516 Caparica, Portugal}
\email{carolink@andrew.cmu.edu}
\begin{document}

\linespread{1.1}


\begin{abstract}The asymptotic behavior of the micromagnetic free energy governing a ferromagnetic film is 
studied as its thickness gets smaller and smaller compared to its cross section. Here the static Maxwell equations 
are treated as a Murat's constant-rank PDE constraint on the energy functional. In contrast to previous work this 
approach allows to keep track of the induced magnetic field without solving the magnetostatic equations.
In particular, the mathematical results of Gioia and James~[\textit{Proc.\ R.\ Soc.\ Lond.\ A} 453 (1997), pp. 213--223] 
regarding convergence of minimizers are recovered by giving a characterization of the corresponding $\Gamma$-limit.
\vspace{8pt}

\noindent\textsc{MSC (2010):} 49J45 (primary); 35E99, 35Q61, 74F15, 74K35. 
 
\noindent\textsc{Keywords:} dimension reduction, thin films, PDE constraints, $\Gamma$-con\-ver\-gence, micromagnetism.

\vspace{8pt}

\noindent\textsc{Date:} \today.
\end{abstract}

\maketitle


\section{Introduction}

Over the last twenty years there has been tremendous scientific progress in the research on thin-film devices pushing 
forward technology and leading to important industrial applications. 
By a thin film one understands a layer of material whose thickness 
ranges between a fractional amount of a nanometer and a couple of micrometers. When using thin films in computer data 
storage media and solar cells, a deep understanding of their ferromagnetic properties is of great 
importance \cite{Ohr02, HS09}. A widely used mathematical procedure to achieve exactly that takes the theory of 
micromagnetics for bulk bodies \cite{Bro62, Bro63} as a starting point and derives 
reduced theories capable of capturing the specific features of thin material layers by means of dimension reduction techniques.

It was in \cite{GJ97} that for the first time authors studied convergence of minimizers of the micromagnetic energy 
on a film whose thickness gets smaller and smaller relative to its cross section. 
In this article we follow the same approach regarding scaling, which corresponds to considering the regime of
very small film samples, but we employ an equivalent formulation in the sense of \cite{DeS93}.~The latter 
illustrates that micromagnetism is actually one of the examples, where the mathematical modeling of physical phenomena
within a variational formulation leads to functionals that do not simply depend on gradient fields. Instead one faces more
intricate partial differential constraints that involve an interaction of divergence- and curl-free vector fields. Precisely, 
the static Maxwell (or magnetostatic) equations, which govern the relation between the magnetization $\bar{m}$ of a ferromagnetic body 
occupying a bounded domain $\Omega\subset \R^3$ and its induced magnetic field $\bar{h}:\R^3\to \R^3$, read
\begin{align}\label{Maxwell}
\divergerg \bigl(\bar{m}+\bar{h}\bigr)=0\qquad\text{ in $\R^3$,}\\
\qquad \curl \bar{h}=0\qquad\text{ in $\R^3$},\nonumber
\end{align}
where $\bar{m}:\Omega\to\R^3$ is identified with its trivial extension to the whole space by zero.
In the literature this mathematical difficulty is commonly tackled by explicitly solving
or proving existence of solutions to the magnetostatic equations in their weak formulation and expressing $\bar{h}$ in terms of $\bar{m}$. 
Our idea is to keep both $\bar{m}$ and $\bar{h}$ as variables and work directly with the PDE constraint imposed by \eqref{Maxwell}. 
The main result of this paper is a rigorous $\Gamma$-convergence based $3$d--$2$d dimension reduction for the constrained micromagnetic functional.
Our approach allows us to keep track during the limit process not only of the magnetization, but at the same time also of the induced field. 


\section{Formulation of the problem and statement of the main result}\label{sec:problem}
Let $\Omega_\eps=\omega\times (0,\eps)$ model a ferromagnetic body of thickness $\eps>0$ with cross section $\omega\subset \R^2$, 
wlog $\abs{\omega}=1$.
The free energy per unit volume that emerges in the theory of micromagnetism \cite{Bro63, LLP84} is given by
\begin{align*}
E_\eps\left[\bar{m}, \bar{h}\right]=
\left\{\begin{array}{cl}
\displaystyle\frac{1}{\eps} \int_{\Omega_\eps} \alpha \,\abs{\nabla \bar{m}}^2 + \varphi(\bar{m})\dd{y} 
+ \frac{1}{2} \int_{\R^3}\abs{\bar{h}}^2 \dd{y}, &\text{if $(\bar{m},\bar{h})\in \Vcal_\eps$},\\
\infty, & \text{ otherwise\,.}
\end{array}\right.
\end{align*}
Here $\alpha>0$ is a material constant and $\varphi:\R^3\to \left.\left[0,\infty\right.\right)$ is a continuous, 
even function featuring crystallographic symmetry. Further,
\begin{align*}
\Vcal_\eps&=\Bigl\{(\bar{m},\bar{h})\in W^{1,2}(\Omega_\eps;\R^3)\times L^2(\R^3;\R^3):\\ 
&\qquad\qquad\qquad\qquad\qquad\qquad\qquad\Acal^{\text{\rm mag}} \Bigl(\begin{array}{c}
    \bar{m} \\
    \bar{h} 
  \end{array}\Bigr)=0 \text{ in $\R^3$}, \abs{\bar{m}}=m_s \text{ in $\Omega_\eps$}\Bigr\}.
\end{align*}

The first order PDE constraint in the definition of $\Vcal_\eps$, namely
\begin{align*}
\Acal^{\text{\rm mag}}\left(\begin{array}{c}
    \bar{m} \\
    \bar{h} 
\end{array}\right):=\left( \begin{array}{c|c}
    \divergerg & \; \divergerg \\ \hline
    0   & \; \curl
  \end{array}\right)\left(\begin{array}{c}
    \bar{m} \\
    \bar{h} 
\end{array}\right)=\left(\begin{array}{c}
    \divergerg (\bar{m}+\bar{h}) \\
    \curl \bar{h}
\end{array}\right)= 0,
\end{align*} 
conveys the magnetostatic equations. 
The operator $\curl$ is supposed to be interpreted as $\curl=\nabla\times$, i.e.\ 
\begin{align*}
\curl \bar{h}= \bigl(\partial_2 \bar{h}_3-\partial_3\bar{h}_2,\, \partial_3 \bar{h}_1-\partial_1 \bar{h}_3,\, 
\partial_1 \bar{h}_2-\partial_2 \bar{h}_1\bigr)^T.
\end{align*}
Notice that throughout this work all occurring differential operators and partial derivatives are to be understood in the 
sense of distributions, for example $\divergerg(\bar{m}+\bar{h})=0$ in $\R^3$ means $-\int_{\R^3}(\bar{m}+\bar{h})\cdot \nabla \phi\dd{y}=0$ 
for all test functions $\phi\in C_c^\infty(\R^3)$.

Physically speaking, the nonconvex constraint $\abs{\bar{m}}=m_s$ in $\Omega_\eps$ (depending on the regularity of $\bar{m}$, 
this equality may only be fulfilled pointwise a.e.\ in $\Omega_\eps$) encodes the fundamental assumption that the body is 
locally saturated with saturation magnetization $m_s>0$. The second term in the definition of $E_\eps$ is the anisotropy energy, which 
penalizes magnetizations varying from special directions within the crystal lattice of the ferromagnet. The latter are called 
the easy axes of magnetization. The contribution of exchange energy is captured by the first term of $E_\eps$. It results from a force 
tending to align magnetic moments of neighboring atoms and therefore favors regions of constant magnetization. The third summand in 
$E_\eps$ is an integral over the whole space $\R^3$ modeling the energy of the magnetic field $\bar{h}$ induced by $\bar{m}$. These 
three energy components impose competing requirements on the magnetization and minimizers of $E_\eps$ may form interesting structures. In the 
case of bulk bodies one observes Weiss domains separated by thin Bloch walls. 
For more details on the physical motivation and interpretation of the nonlocal and 
nonconvex energy $E_\eps$ see for example \cite{Vis85, JK90, JM94, ABV91} and the references therein.

As originally stated in \cite{Tar79} and further discussed in~\cite{FM99}, $\Acal^{\text{\rm mag}}$ is a first order differential 
operator that meets Murat's constant-rank property \cite{Mur81}, i.e.\  the symbol $\Abb_{\Acal^{\text{\rm mag}}}$ of $\Acal^{\text{\rm mag}}$ satisfies
\begin{align}\label{constantrank}
\rank \,\Abb_{\Acal^{\text{\rm mag}}}(\xi) = const. \qquad \text{for all $\xi \in \Sbb^2$}.
\end{align}
Indeed, $\Abb_{\Acal^{\text{\rm mag}}}(\xi) \in \Lin(\R^3\times \R^3,\,\R\times\R^3)$ and it holds 
\begin{align*}\ker \Abb_{\Acal^{\text{\rm mag}}}(\xi) &= \set{(a,b)\in \R^3\times \R^3}{\xi \cdot (a+b)=0, \ \xi\times b=0}\\ 
&= \set{(a,b)\in \R^3\times \R^3}{b=\lambda \xi,\ \xi\cdot a=-\lambda, \ \lambda\in \R}.
\end{align*}
So $\rank \Abb_{\Acal^{\text{\rm mag}}}(\xi)=6-\dim \ker \Abb_{\Acal^{\text{\rm mag}}}(\xi)=3$ for all $\xi\in \Sbb^2$. Hence, 
the problem we are interested in can be studied in 
the more abstract context of dimension reduction for functionals on $\Acal$-free vector fields. 
The work on variational problems within the $\Acal$-free framework can be traced back to~\cite{Dac82} and was advanced
by Fonseca and M{\"u}ller~\cite{FM99}, who came up with the notion of $\Acal$-quasiconvexity (in its modern sense) and studied 
lower semicontinuity of  functionals involving integrands with this property. Since then, a lot of papers investigating for 
instance relaxation, homogenization and Young measures in the $\Acal$-free setting have been published~\cite{BFL00, FK10, FLM04, FM99}. 
The first article to cover $3$d--$2$d asymptotic analysis in such generality is~\cite{KR11}, while the special case of thin-film 
limits for gradient dependent problems ($\Acal=\curl$) has been treated before (see for instance~\cite{LR95, LR00, FFL07}). 
For a recent result in the context of
functionals on solenoidal vector fields ($\Acal=\divergerg$) we refer to \cite{Kro10}. In fact, \cite{KR11} provides the technical basis for the 
work presented in the following.
 
To obtain a variational problem on the fixed domain $\Omega_1=\omega \times (0,1)$, we apply the standard parameter rescaling,
\begin{align*}
x=(x',x_d)=(y', \eps^{-1} y_d) 
\end{align*} 
for $y=(y_1, \ldots, y_{d-1},y_d)=(y', y_d)\in \Omega_\eps$, and we set $m(x)=\bar{m}(y)=\bar{m}(x',\eps x_d)$ and 
$h(x)=\bar{h}(y)=\bar{h}(x',\eps x_d)$. Then $E_\eps$ transforms into 
\[
F_\eps[m,h]=
\left\{\begin{array}{cl}
\displaystyle\int_{\Omega_1} \alpha \,\abs{\nabla_\eps m}^2 + \varphi(m)\dd{x} + \frac{1}{2} \int_{\R^3}\abs{h}^2 \dd{x}, &\text{if $(m,h)\in \Ucal_\eps$},\\
\infty, & \text{ otherwise\,,}
\end{array}\right.
\]
where 
\begin{align*}
\Ucal_\eps&=\Bigl\{(m,h)\in W^{1,2}(\Omega_1;\R^3)\times L^2(\R^3;\R^3):\\ 
&\qquad\qquad\qquad\qquad\qquad\qquad\qquad\Acal^{\text{\rm mag}}_\eps\left(\begin{array}{c}
    \bar{m} \\
    \bar{h} 
\end{array}\right)=0 \text{ in $\R^3$}, \abs{m}=m_s \text{ in $\Omega_1$}\Bigr\}.
\end{align*} 
The rescaled versions of the operators $\nabla=(\nabla', \partial_3)^T=(\partial_1, \partial_2, \partial_3)^T$ and $\Acal^{\text{\rm mag}}$ are given by 
\begin{align}\label{def:nabla_eps}
\nabla_\eps=\Bigl(\nabla',\, \frac{1}{\eps}\partial_3\Bigr)^T \qquad\text{and}\qquad \Acal^{\text{\rm mag}}_\eps=\Abb_{\Acal^{\text{\rm mag}}}(\nabla_\eps),
\end{align}
respectively. Accordingly, we define
\begin{align*}
\divergerg_\eps= \nabla_\eps \cdot \qquad\text{and}\qquad \curl_\eps=\nabla_\eps\times.
\end{align*}
In view of these definitions the main result is the following:

\begin{theorem}\label{theo:mainresult_Gamma}
The $\Gamma$-limit of $F_\eps$ for $\eps\to 0^+$ with respect to weak convergence in $W^{1,2}(\Omega_1;\R^3)\times L^2(\R^3;\R^3)$ exists and 
is represented as
\[
F_0[m,h]=
\left\{\begin{array}{cl}
\displaystyle\int_{\Omega_1} \alpha\, \abs{\nabla' m}^2 + \varphi(m)\dd{x}+ \frac{1}{2}\int_{\R^3}\abs{h}^2\dd{x}, &\text{if $(m,h)\in \Ucal_0$},\\
\infty, & \text{ otherwise\,,}
\end{array}\right.
\] 
with 
\begin{align*}
\Ucal_0 &=\Bigl\{(m,h)\in W^{1,2}(\Omega_1;\R^3)\times L^2(\R^3;\R^3)\,:\\ 
&\qquad\qquad\qquad\qquad \Acal_0^{\text{\rm mag}}\left(\begin{array}{c}
    m \\
    h 
\end{array}\right) =0 \text{ in $\R^3$},\, \partial_3 m=0 \text{ in $\Omega_1$},\,\abs{m}=m_s \text{ in $\Omega_1$}\Bigr\}
\end{align*}
and $\Acal_0^{\text{\rm mag}}$ defined through
\begin{align*}
 \Acal^{\text{\rm mag}}_0\left(\begin{array}{c}
    m \\
    h 
\end{array}\right):=\left(\begin{array}{c}
  \partial_3(m_3+h_3)\\
 -\partial_3 h_2\\ 
  \partial_3 h_1\\  
  \partial_1 h_2-\partial_2 h_1
\end{array}\right).
\end{align*}
Moreover, compactness holds in the weak topology of $W^{1,2}(\Omega_1;\R^3)\times L^2(\R^3;\R^3)$. 
\end{theorem}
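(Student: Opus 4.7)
The plan is to follow the standard three-part strategy for $\Gamma$-convergence with compactness: extract weak limits from energy-bounded sequences and identify the limit admissibility class $\Ucal_0$, verify the $\liminf$ inequality along such sequences, and construct a recovery sequence for every $(m,h)\in\Ucal_0$. The underlying engine is the abstract thin-film dimension-reduction theory for $\Acal$-free sequences of \cite{KR11}, specialized to $\Acal=\Acal^{\text{\rm mag}}$, which is available by virtue of the constant-rank property \eqref{constantrank}; the nonconvex pointwise saturation constraint $\abs{m}=m_s$ is handled on top of this framework, using the uniform $L^\infty$-bound it enforces on $m_\eps$.

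\emph{Compactness and the limit PDE.} From $\sup_\eps F_\eps[m_\eps,h_\eps]<\infty$ one extracts $\norm{\nabla' m_\eps}_{L^2(\Omega_1)}\le C$, $\norm{\partial_3 m_\eps}_{L^2(\Omega_1)}\le C\eps$, $\norm{m_\eps}_{L^\infty}\le m_s$, and $\norm{h_\eps}_{L^2(\R^3)}\le C$. Along a subsequence, $m_\eps\weakly m$ in $W^{1,2}(\Omega_1;\R^3)$ with $\partial_3 m=0$ inherited from the $\eps$-scaling of $\partial_3 m_\eps$, $m_\eps\to m$ strongly in $L^p(\Omega_1)$ for every $p<\infty$ by Rellich--Kondrachov so that $\abs{m}=m_s$ a.e., and $h_\eps\weakly h$ in $L^2(\R^3;\R^3)$. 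Passing to the distributional limit in $\Acal^{\text{\rm mag}}_\eps(m_\eps,h_\eps)=0$ -- multiplying the components carrying a $1/\eps$ factor by $\eps$ to defuse the singular scaling, while the $\eps$-independent component $\partial_1 h_{\eps,2}-\partial_2 h_{\eps,1}=0$ passes directly -- then yields $\Acal^{\text{\rm mag}}_0(m,h)=0$ in $\R^3$, so $(m,h)\in\Ucal_0$.

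\emph{Liminf inequality.} For $(m_\eps,h_\eps)\weakly(m,h)$, the pointwise identity $\abs{\nabla_\eps m_\eps}^2=\abs{\nabla' m_\eps}^2+\eps^{-2}\abs{\partial_3 m_\eps}^2\ge\abs{\nabla' m_\eps}^2$ combined with weak lower semicontinuity of $v\mapsto\int\abs{\nabla' v}^2$ handles the exchange term; continuity of $\varphi$ and strong $L^2$-convergence of $m_\eps$ handle the anisotropy term; and weak lower semicontinuity of $h\mapsto\norm{h}_{L^2(\R^3)}^2$ handles the magnetostatic term.

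\emph{Recovery sequence.} This is the principal obstacle. Given $(m,h)\in\Ucal_0$, I would take the trivial ansatz $m_\eps:=m$, which already satisfies $\abs{m_\eps}=m_s$ and $\partial_3 m_\eps=0$, and define $h_\eps\in L^2(\R^3;\R^3)$ as the unique $L^2$-minimizer of $\tfrac{1}{2}\int_{\R^3}\abs{h}^2\dd{x}$ subject to $\Acal^{\text{\rm mag}}_\eps(m,h)=0$ in $\R^3$ -- that is, the rescaled magnetostatic field induced by $m$, obtainable via $h_\eps=-\nabla_\eps u_\eps$ where $\divergerg_\eps\nabla_\eps u_\eps=\divergerg_\eps m$ in $\R^3$. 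Via the Fourier symbol $\Abb_{\Acal^{\text{\rm mag}}}(\xi',\xi_3/\eps)$ this $h_\eps$ is obtained from $m$ by an explicit Fourier multiplier, namely the $L^2$-orthogonal projection associated with $\ker\Acal^{\text{\rm mag}}_\eps$; as $\eps\to 0^+$ this multiplier concentrates on the $\xi_3$-axis and converges pointwise a.e.\ to the analogous projection associated with $\ker\Acal^{\text{\rm mag}}_0$, and a dominated-convergence argument in Fourier space then yields $h_\eps\to h$ strongly in $L^2(\R^3;\R^3)$, with $h$ the unique such field in $\ker\Acal^{\text{\rm mag}}_0$ compatible with $m$. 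Since $m_\eps$ is independent of $\eps$, the exchange and anisotropy contributions are stationary and coincide with their limit values, while $\norm{h_\eps}_{L^2}^2\to\norm{h}_{L^2}^2$ delivers the matching $\limsup$ bound.
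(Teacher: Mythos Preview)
Your compactness and liminf arguments coincide with the paper's essentially line for line.

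For the recovery sequence you take a genuinely different route. The paper stays inside the abstract $\Acal$-free framework of \cite{KR11}: it applies the projection $\Pcal_{\Acal^{\text{mag}}_\eps}$ to the \emph{given} pair $(m,h)$ to produce $\Acal^{\text{mag}}_\eps$-free fields $(\hat m_\eps,\hat h_\eps)\to(m,h)$ in $L^2$ (via pointwise convergence of the symbol projectors $\Pbb_{\Acal_\eps}\to\tilde\Pbb_0$ and dominated convergence), then sets $m_\eps=m$ and corrects the field to $h_\eps=\Pcal_{\curl_\eps}(\hat h_\eps-m+\hat m_\eps)$, verifying $\divergerg_\eps(m_\eps+h_\eps)=0$ through the Helmholtz-type identity $\Ical-\Pcal_{\curl_\eps}=\Pcal_{\divergerg_\eps}$. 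You instead solve the rescaled magnetostatic equation directly, $h_\eps=-\nabla_\eps u_\eps$ with $\divergerg_\eps\nabla_\eps u_\eps=\divergerg_\eps m$, and pass to the limit in the explicit multiplier $-\xi_\eps\otimes\xi_\eps/|\xi_\eps|^2$. This is precisely the Gioia--James strategy \cite{GJ97} that the paper was designed to circumvent; it is more hands-on here but does not export to other constant-rank constraints, whereas the paper's argument is an instance of a general mechanism.

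One point in your construction needs to be made explicit. Since your $h_\eps$ is built from $m$ alone, its $L^2$-limit is $-(0,0,m_3\chi_{\Omega_1})$, and you then invoke that this is ``the unique such field in $\ker\Acal^{\text{mag}}_0$ compatible with $m$'' to match it with the \emph{given} $h$. That uniqueness is correct but is not automatic from the definition of $\Ucal_0$: it requires the observation that an $L^2(\R^3)$-function which is distributionally constant in $x_3$ must vanish, applied to $h_1$, $h_2$ and $m_3\chi_{\Omega_1}+h_3$ separately. Without this step the $\Gamma$-limsup inequality would only be established at pairs $(m,h)$ with $h$ equal to the particular limit your construction singles out, rather than at every $(m,h)\in\Ucal_0$. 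The paper's projection-based construction avoids this issue because it starts from the pair $(m,h)$ and tracks $h$ throughout; the uniqueness argument surfaces in the paper only afterwards, in Section~\ref{sec:comparisonGJ97}, when reconciling $F_0$ with the Gioia--James functional.
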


As we discuss in Section~\ref{sec:comparisonGJ97} the result of Theorem~\ref{theo:mainresult_Gamma} is in complete agreement with the 
limiting micromagnetic energy derived in~\cite{GJ97}. 

In \cite{KR11}, dimension reduction within the general $\Acal$-free setting is investigated. One purpose of Theorem~\ref{theo:mainresult_Gamma} is to 
illustrate the power of the concepts and tools developed there by means of another physically relevant example. 
(An immediate first application to bending of thin films in nonlinear elasticity is studied in~\cite[Section~5]{KR11}.)

Since the functionals $F_\eps$ contain first order derivatives of $m$ and involve a nonconvex constraint, they do not exactly fit into 
the context of \cite{KR11}. (The issue of $F_\eps$ being defined on functions on the whole space can be overcome by replacing Fourier 
series with Fourier transforms in the proofs. However, it is not clear how to extend that theory to mixed-order differential operators.)
Therefore, we consider $F_\eps$ as split into a part that is convex in the derivatives of 
$m$ and one that is in line with \cite{KR11}. 
When it comes to proving the upper bound, the crucial step is to exploit a tool introduced in \cite{KR11}, which yields convergence of 
the symbols of $\Acal^{\text{\rm mag}}_\eps$ for $\eps\to 0^+$. This provides a ``first candidate" for the recovery sequence which, however, 
lacks the necessary regularity and fails to meet the nonconvex constraint imposed by the requirement of local saturation. To handle this matter, 
we modify the sequence by choosing the magnetizations to be constant and by adjusting the exterior fields with the help of projection operators onto 
$\curl_\eps$-free fields.

In the next two sections we give the detailed proof of Theorem~\ref{theo:mainresult_Gamma} by showing separately the 
required upper and lower bounds. We remark that throughout this work we use generalized sequences with index $\eps>0$, 
like $(u_\eps)_\eps$, by which we refer to any sequence $(u_{\eps_j})_j$ with $\eps_j\to 0^+$ as $j\to \infty$.


\section{Proof of compactness and the lower bound}

We begin by proving the following compactness result, which is essentially based on the coercivity of the micromagnetic free energy. 
Notice that extracted subsequences are not relabeled in the sequel.

\begin{proposition}[Compactness]\label{prop:compactness}
Let $\eps_j\to 0^+$ for $j\to \infty$. Further assume that $(m_{\eps_j}, h_{\eps_j})_j\subset W^{1,2}(\Omega_1;\R^3)\times L^2(\R^3;\R^3)$ is
a bounded energy sequence for $F_{\eps_j}$, i.e.\ \begin{align*}
F_{\eps_j}[m_{\eps_j}, h_{\eps_j}]\leq C<\infty \qquad\text{ for all $j\in \N$}.
\end{align*}
Then there exists a subsequence $(m_{\eps_j}, h_{\eps_j})_j$ and $(m,h)\in  W^{1,2}(\Omega_1;\R^3)\times L^2(\R^3;\R^3)$ such that 
\begin{align}\label{convergence_mepsheps}
m_{\eps_j} &\weakly m \qquad \text{in $W^{1,2}(\Omega_1;\R^3)$,}\\
h_{\eps_j} &\weakly h \qquad \text{ in $L^2(\R^3;\R^3)$}\nonumber
\end{align}
for $j\to \infty$. Moreover, it holds that $(m,h)\in \Ucal_0$.
\end{proposition}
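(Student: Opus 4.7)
The plan is to use coercivity of $F_\eps$ to get boundedness, extract weakly convergent subsequences, and then verify that the weak limit $(m,h)$ satisfies the three conditions defining $\Ucal_0$: the PDE constraint $\Acal_0^{\text{\rm mag}}(m,h)=0$, the $x_3$-independence $\partial_3 m=0$, and the saturation $\abs{m}=m_s$.

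First I would exploit the energy bound $F_{\eps_j}[m_{\eps_j},h_{\eps_j}]\leq C$. Since $\alpha>0$ and $\varphi\geq 0$, the rescaled gradient satisfies $\norm{\nabla_{\eps_j} m_{\eps_j}}_{L^2(\Omega_1)}\leq C$, which in particular gives $\norm{\nabla' m_{\eps_j}}_{L^2}\leq C$ and $\norm{\partial_3 m_{\eps_j}}_{L^2}\leq C\eps_j$. Combined with the pointwise bound $\abs{m_{\eps_j}}=m_s$ a.e.\ in $\Omega_1$ (which gives an $L^\infty$, hence $L^2$, bound independent of $j$), this yields boundedness of $(m_{\eps_j})_j$ in $W^{1,2}(\Omega_1;\R^3)$. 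Similarly, the magnetostatic term gives $\norm{h_{\eps_j}}_{L^2(\R^3)}\leq C$. Passing to a subsequence, I extract weak limits $m$ and $h$ as in \eqref{convergence_mepsheps}. The bound $\norm{\partial_3 m_{\eps_j}}_{L^2}\leq C\eps_j\to 0$ immediately implies $\partial_3 m_{\eps_j}\to 0$ strongly in $L^2(\Omega_1)$, so $\partial_3 m=0$. Moreover, Rellich--Kondrachov gives strong $L^2(\Omega_1)$ convergence (and a.e.\ convergence along a further subsequence) of $m_{\eps_j}$ to $m$, so the constraint $\abs{m_{\eps_j}}=m_s$ passes to the limit and yields $\abs{m}=m_s$ a.e.

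The key remaining step is to pass to the limit in the PDE constraint $\Acal^{\text{\rm mag}}_{\eps_j}(m_{\eps_j},h_{\eps_j})=0$ in $\R^3$. Expanding this using the definition of $\nabla_{\eps_j}$, I get the two distributional identities
\begin{align*}
\partial_1(m_{\eps_j,1}+h_{\eps_j,1})+\partial_2(m_{\eps_j,2}+h_{\eps_j,2})+\tfrac{1}{\eps_j}\partial_3(m_{\eps_j,3}+h_{\eps_j,3})&=0,\\
\bigl(\partial_2 h_{\eps_j,3}-\tfrac{1}{\eps_j}\partial_3 h_{\eps_j,2},\ \tfrac{1}{\eps_j}\partial_3 h_{\eps_j,1}-\partial_1 h_{\eps_j,3},\ \partial_1 h_{\eps_j,2}-\partial_2 h_{\eps_j,1}\bigr)^T&=0.
\end{align*}
Multiplying the divergence equation and the first two components of the curl equation by $\eps_j$ gives distributional identities with uniformly bounded $L^2$-coefficients, whose non-$\eps_j$ terms survive the weak limit while the $\eps_j$-prefactored terms vanish. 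Concretely, this produces $\partial_3(m_3+h_3)=0$, $-\partial_3 h_2=0$, and $\partial_3 h_1=0$ in $\R^3$, while the third curl component $\partial_1 h_{\eps_j,2}-\partial_2 h_{\eps_j,1}=0$ is $\eps_j$-independent and passes directly to $\partial_1 h_2-\partial_2 h_1=0$. Together these four equations are precisely $\Acal_0^{\text{\rm mag}}(m,h)=0$.

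The main obstacle is making sure the multiplication-by-$\eps_j$ step is justified distributionally: for any test function $\phi\in C_c^\infty(\R^3)$, I pair the equation with $\phi$ and move $\eps_j$ onto the test function, where the boundedness of $(m_{\eps_j},h_{\eps_j})$ in $L^2$ and the weak convergence make the $\eps_j$-terms tend to zero while the remaining terms converge to the corresponding pairings with $(m,h)$. Everything else (coercivity, Rellich, a.e.\ convergence of $\abs{m_{\eps_j}}$) is standard. This establishes $(m,h)\in\Ucal_0$ and concludes the compactness claim.
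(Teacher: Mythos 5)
Your proof is correct and follows essentially the same route as the paper: coercivity plus the saturation constraint give $W^{1,2}\times L^2$ bounds, Rellich yields a.e.\ convergence to preserve $\abs{m}=m_s$, the factor $1/\eps_j$ in $\nabla_{\eps_j}$ forces $\partial_3 m=0$, and the constraint $\Acal^{\text{\rm mag}}_{\eps_j}(m_{\eps_j},h_{\eps_j})=0$ is passed to the limit componentwise after multiplying the $1/\eps_j$-weighted equations by $\eps_j$ against test functions. Your explicit justification of the distributional limit is slightly more detailed than the paper's, but the argument is identical in substance.
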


\begin{proof}
In view of the constraint $|m_{\eps_j}| = m_s$ in $\Omega_1$ and the fact that 
\begin{align}\label{bound_gradepsm}
\norm{\nabla_{\eps_j} m_{\eps_j}}_{L^2(\Omega_1;\R^{3\times 3})}\leq C<\infty\qquad\text{ for all $j\in \N$},
\end{align}
one infers that $\norm{m_{\eps_j}}_{W^{1,2}(\Omega_1;\R^{3\times 3})}$ is bounded uniformly with respect to $j$,
which implies the existence of a subsequence $(m_{\eps_j})_j$ and a function $m\in W^{1,2}(\Omega_1;\R^3)$ such that 
$m_{\eps_j}\weakly m$ in $W^{1,2}(\Omega_1;\R^3)$. By compact embedding we find (after passing to a subsequence) 
that $m_{\eps_j}\to m$ pointwise a.e.\ in $\Omega_1$ as $j\to \infty$, so that $|m|= m_s$ in $\Omega_1$. Recalling 
the definition of $\nabla_{\eps}$ in $\eqref{def:nabla_eps}$, we conclude from $\eqref{bound_gradepsm}$ that 
$\partial_3 m_{\eps_j}\to 0$ in $L^2(\Omega_1;\R^3)$. Thus, $\partial_3 m=0$ in $\Omega_1$ by uniqueness of the limit. 
Since the induced energy contribution of $F_{\eps_j}[m_{\eps_j}, h_{\eps_j}]$ is bounded, one can extract a subsequence 
of $(h_{\eps_j})_j$ satisfying
\begin{align*}
h_{\eps_j}\weakly h \qquad \text{in $L^2(\Omega_1;\R^3)$} 
\end{align*}
for some $h\in L^2(\Omega_1;\R^3)$.
The expression $\curl_{\eps_j} h_{\eps_j}=0$ in $\R^3$ is equivalent to 
\vspace{-0.1cm}
\begin{align*}
 \partial_2 (h_{\eps_j})_3 - 1/{\eps_j}\,\partial_3 (h_{\eps_j})_2 &=0 \qquad\text{ in $\R^3$},\\
 1/{\eps_j}\, \partial_3 (h_{\eps_j})_1 - \partial_1(h_{\eps_j})_3 &=0 \qquad\text{ in $\R^3$},\\
 \partial_1(h_{\eps_j})_2-\partial_2(h_{\eps_j})_1 &=0\qquad\text{ in $\R^3$}.
\end{align*}
When passing to the limit $j\to \infty$, it follows that $\partial_3 h_2=\partial_3 h_1=0$ and $\partial_1 h_2=\partial_2 h_1$ in $\R^3$. 
Finally we exploit $\divergerg_{\eps_j}(m_{\eps_j} + h_{\eps_j})=0$ in $\R^3$ to derive
\begin{align*}
\partial_3(m_3+h_3)=0 \qquad\text{in $\R^3$}.
\end{align*}
Thus, $(m,h)\in \Ucal_0$.
\end{proof}
\begin{remark}\label{remark}
If $(m_{\eps_j}, h_{\eps_j})_{\eps_j}$ is a sequence of minimizers for $F_{\eps_j}$, the convergence in \eqref{convergence_mepsheps}
can be shown to be strong in $W^{1,2}$ and $L^2$, respectively (compare \cite[Theorem~4.1]{GJ97}).
\end{remark}

Next we give the proof of the lower bound.
Let $m_\eps \weakly m$ in $W^{1,2}(\Omega_1;\R^3)$ and $h_\eps \weakly h$ in $L^2(\R^3;\R^3)$ as $\eps$ tends to zero. 
Consider any $\eps_j\to 0^+$ for $j\to \infty$ and assume that
\begin{align*}
\liminf_{j\to \infty}F_{\eps_j}[m_{\eps_j}, h_{\eps_j}]=\lim_{j\to \infty}F_{\eps_j}[m_{\eps_j}, h_{\eps_j}]<\infty,
\end{align*}
otherwise the corresponding liminf-inequality is immediate.
Then $(m_{\eps_j}, h_{\eps_j})_j\subset \Ucal_{\eps_j}$ is of bounded energy and $(m,h)\in \Ucal_0$ by 
Proposition~\ref{prop:compactness}. In view of the compact embedding $W^{1,2}(\Omega_1;\R^3)\hookrightarrow L^2(\Omega_1;\R^3)$ 
we obtain a subsequence with $m_{\eps_j}\to m$ pointwise a.e.\ in $\Omega_1$, so that by the continuity of $\varphi$, the weak lower semicontinuity 
of the $L^2$-norm and Fatou's lemma,
\begin{align*}
\lim_{j\to \infty}F_{\eps_j}[m_{\eps_j}, h_{\eps_j}]&\geq \lim_{j\to \infty}\int_{\Omega_1} \alpha \,\abs{\nabla' m_{\eps_j}}^2 
+ \varphi(m_{\eps_j})\dd{x} + \frac{1}{2} \int_{\R^3}\abs{h_{\eps_j}}^2 \dd{x}\nonumber\\
&\geq \int_{\Omega_1} \alpha \,\abs{\nabla' m}^2 + \varphi(m)\dd{x} + \frac{1}{2} \int_{\R^3}\abs{h}^2 \dd{x}.
\end{align*}
Hence, \begin{align*}\liminf_{\eps\to 0^+}F_\eps[m_\eps, h_\eps]\geq F_0[m,h],\end{align*} which is the liminf-inequality.


\section{Construction of a recovery sequence}

This section is based on arguments involving operators of the following form: 
For given matrices $A^{(1)}, \ldots, A^{(d)}\in\R^{l \times n}$ let $\Acal$ be the linear partial differential operator of first order defined through
\begin{equation}\label{operator_A}
  \Acal:= \sum_{k=1}^d A^{(k)} \partial_k.
\end{equation}
Then the symbol $\Abb_\Acal$ of $\Acal$ is given by
\[
\Abb_\Acal(\xi) := \sum_{k=1}^d A^{(k)} \xi_k,  \qquad \xi \in \R^d.
\]
The essential assumption on $\Acal$ is Murat's constant-rank condition \cite{Mur81, FM99}, precisely
\begin{align*}
\rank \,\Abb_\Acal(\xi) = const. \qquad \text{for all $\xi \in \Sbb^{d-1}$}.
\end{align*}
By \cite[Lemma~2.2]{KR11} the operators $\Acal_\eps:=\Abb_\Acal(\nabla_\eps)$ are of constant rank for all $\eps>0$ provided $\Acal$ has the same property.

As established in Section~\ref{sec:problem} (see $\eqref{constantrank}$) $\Acal^{\text{\rm mag}}$ fits into the framework described above with 
\begin{align*}
 d=3,\ n=6,\ l=4 \quad \text{and}\quad u=\Bigl(\begin{array}{c}m\\h\end{array}\Bigr).
\end{align*}

So, after having proved some technical tools for general constant-rank operators we will be able to apply these findings to the context of micromagnetics.

The next theorem is a modification of \cite[Theorem~2.7]{KR11}, which is formulated for $L^p$-functions on the torus, 
for $L^2$-functions on the whole space. The important issue in comparison with \cite[Lemma~2.14]{FM99} is to obtain constants independent of
$\eps$. A first comment in this direction is made in \cite{Kro10}, where projection operators onto $\divergerg$-free fields are investigated.

\begin{lemma}[Projection onto $\Acal_\eps$-free fields]\label{theo:projection}
Suppose $\Acal$ is a constant-rank operator as defined in $\eqref{operator_A}$ and $\eps\in (0,1)$. 
Then there exist bounded operators $\Pcal_{\Acal_\eps}:L^2(\R^d;\R^n)\to L^2(\R^d;\R^n)$ with the following properties:
\begin{enumerate}
\item[\it (i)] $(\Pcal_{\Acal_\eps} \circ \Pcal_{\Acal_\eps}) u = \Pcal_{\Acal_\eps} u$ for all $u\in L^2(\R^d;\R^n)$.
 \item[\it (ii)] $(\Acal_\eps \circ \Pcal_{\Acal_\eps}) u=0$ in $\R^d$ for all $u\in L^2(\R^d;\R^n)$.
 \item[\it (iii)] The operators $\Pcal_{\Acal_\eps}$ are uniformly bounded with respect to $\eps$, i.e.
\[
  \qquad \normb{\Pcal_{\Acal_\eps} u}_{L^2(\R^d;\R^n)} \leq  C\,\norm{u}_{L^2(\R^d;\R^n)}
\]
for all $u\in L^2(\R^d;\R^n)$ with a constant $C>0$ independent of $\eps$.
\item[\it (iv)] There exists a constant $C>0$ such that
\[
  \qquad \normb{u-\Pcal_{\Acal_\eps} u }_{L^2(\R^d;\R^n)}\leq C\, \normb{\Acal_\eps u}_{W^{-1,2}(\R^d;\R^l)}
\]
for all $u\in L^2(\R^d;\R^n)$ and all $\eps\in (0,1)$.
\end{enumerate}
\end{lemma}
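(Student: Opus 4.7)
The plan is to define $\Pcal_{\Acal_\eps}$ as a Fourier multiplier on $L^2(\R^d;\R^n)$ whose symbol is the pointwise orthogonal projection onto $\ker \Abb_{\Acal_\eps}(\xi)$, and then to read off (i)--(iv) from pointwise properties of that symbol. Concretely, for $\eta \in \R^d \setminus \{0\}$ let $\pi(\eta) \in \Lin(\R^n;\R^n)$ denote the orthogonal projection onto $\ker \Abb_\Acal(\eta)$. The constant-rank hypothesis together with the linear dependence of $\Abb_\Acal$ on $\eta$ ensures that $\eta \mapsto \pi(\eta)$ is smooth on $\R^d\setminus\{0\}$ and homogeneous of degree $0$. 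Since $\Abb_{\Acal_\eps}(\xi) = \Abb_\Acal(\xi_\eps)$ for $\xi_\eps := (\xi_1,\ldots,\xi_{d-1},\xi_d/\eps)$, I would set $\pi_\eps(\xi) := \pi(\xi_\eps)$ and
\begin{align*}
\Pcal_{\Acal_\eps} u := \Fcal^{-1}\bigl(\pi_\eps\,\hat u\bigr),\qquad u\in L^2(\R^d;\R^n),
\end{align*}
with $\Fcal$ the Fourier transform on $\R^d$.

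Properties (i) and (ii) are immediate: on the Fourier side $\pi_\eps(\xi)^2 = \pi_\eps(\xi)$ and $\Abb_{\Acal_\eps}(\xi)\pi_\eps(\xi) = \Abb_\Acal(\xi_\eps)\pi(\xi_\eps) = 0$, which invert through $\Fcal^{-1}$ to $\Pcal_{\Acal_\eps}^2 = \Pcal_{\Acal_\eps}$ and $\Acal_\eps \Pcal_{\Acal_\eps} u = 0$. For (iii) the decisive point is that $\pi_\eps(\xi)$ is an \emph{orthogonal} projection on $\R^n$, so its Euclidean operator norm satisfies $|\pi_\eps(\xi)|\leq 1$ for every $\xi$ and every $\eps$; Plancherel then yields $\norm{\Pcal_{\Acal_\eps} u}_{L^2}\leq \norm{u}_{L^2}$ with constant $C=1$. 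This is precisely the simplification gained by passing from the $L^p$ torus setting of the cited \cite[Theorem~2.7]{KR11} to $L^2$ on the whole space: no Mihlin--H\"ormander multiplier machinery is needed.

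The substantive step is (iv). The algebraic identity to exploit is
\begin{align*}
I - \pi(\eta) = \Abb_\Acal(\eta)^{\dagger}\,\Abb_\Acal(\eta),
\end{align*}
where $\Abb_\Acal(\eta)^\dagger$ denotes the Moore--Penrose pseudoinverse. By constant rank this pseudoinverse is smooth on $\R^d\setminus\{0\}$ and homogeneous of degree $-1$, so there is a constant $C_0$ depending only on $\Acal$ with $|\Abb_\Acal(\eta)^\dagger|\leq C_0/|\eta|$. The monotonicity $|\xi_\eps|^2 = |\xi'|^2 + \xi_d^2/\eps^2 \geq |\xi|^2$ for $\eps\in(0,1]$ then transfers this estimate with the \emph{same} constant to $|\Abb_{\Acal_\eps}(\xi)^\dagger| \leq C_0/|\xi|$ uniformly in $\eps$. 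Consequently, for $\xi\neq 0$,
\begin{align*}
\absb{(I-\pi_\eps(\xi))\hat u(\xi)} = \absb{\Abb_{\Acal_\eps}(\xi)^\dagger\,\Abb_{\Acal_\eps}(\xi)\hat u(\xi)} \leq \frac{C_0}{|\xi|}\,\absb{\widehat{\Acal_\eps u}(\xi)}.
\end{align*}
Combined with the trivial bound $|(I-\pi_\eps(\xi))\hat u(\xi)|\leq |\hat u(\xi)|$ for the low-frequency regime, a high--low split in $\xi$ matches the decay of $|\Abb_{\Acal_\eps}^\dagger|$ against the weight $(1+|\xi|^2)^{-1/2}$ defining the $W^{-1,2}$ norm, and Plancherel closes the estimate.

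I expect the main obstacle to be establishing (iv) with a constant independent of $\eps$: the anisotropic rescaling $\xi \mapsto \xi_\eps$ amplifies one component of the symbol by $1/\eps$, so a naive transfer of the pseudoinverse estimate from $\Acal$ to $\Acal_\eps$ blows up as $\eps\to 0^+$. The key observation that salvages uniformity is the inequality $|\xi_\eps|\geq |\xi|$ valid for $\eps\in(0,1]$, which allows the $\eps$-free homogeneous bound on $\Abb_\Acal^\dagger$ to control $\Abb_{\Acal_\eps}^\dagger$ with a constant depending only on $\Acal$.
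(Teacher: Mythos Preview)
Your approach coincides with the paper's: the Moore--Penrose pseudoinverse $\Abb_\Acal(\eta)^\dagger$ is precisely the operator $\Qbb_\Acal(\eta)$ the paper constructs, and your key uniformity observation $|\xi_\eps|\geq|\xi|$ for $\eps\in(0,1)$ combined with $(-1)$-homogeneity is exactly how the paper obtains the $\eps$-independent bound on $\Qbb_{\Acal_\eps}(\,\cdot/|\cdot|\,)$. One correction to the final sentence: the high--low split with the ``trivial bound'' does not close (iv), since $|(I-\pi_\eps(\xi))\hat u(\xi)|\leq|\hat u(\xi)|$ controls low frequencies only by $\norm{u}_{L^2}$ rather than by $\norm{\Acal_\eps u}_{W^{-1,2}}$; instead, as the paper does, use the single pointwise estimate $\absb{(I-\pi_\eps(\xi))\hat u(\xi)}\leq C_0\,|\xi|^{-1}\absb{\widehat{\Acal_\eps u}(\xi)}$ on all of $\R^d\setminus\{0\}$ and identify $\int_{\R^d}|\xi|^{-2}\absb{\widehat{\Acal_\eps u}(\xi)}^2\,d\xi$ directly with the $W^{-1,2}$-norm.
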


\begin{proof} 
In the sequel we employ the common notation $\Fcal$ to refer to the Fourier transform and use $\Fcal^{-1}$ for its inversion.

For $\xi\in \R^d\setminus \{0\}$ let $\Pbb_{\Acal_\eps}(\xi)$ be the orthogonal projector onto $\ker \Abb_{\Acal_\eps}(\xi)\subset \R^n$. So
the mapping $\Pbb_{\Acal_\eps} \colon \R^d \setminus \{0\} \to \Lin(\R^n;\R^n)$ is $0$-homogeneous and smooth. The fact that the operator norm of 
$\Pbb_{\Acal_\eps}(\xi)$ is equal to $1$ for all $\xi\in \R^d\setminus\{0\}$ and $\eps>0$ 
renders $\Pcal_{\Acal_\eps}:L^2(\R^d;\R^n)\to L^2(\R^d;\R^n)$ defined by
\begin{align}\label{def:projector_P}
\Pcal_{\Acal_\eps} u = \Fcal^{-1} \bigl(\Pbb_{\Acal_\eps}(\frarg) \Fcal u \bigr), \qquad u\in L^2(\R^d;\R^n),
\end{align}
a continuous operator satisfying the estimate 
\begin{align*}
 \normb{\Pcal_{\Acal_\eps} u}_{L^2(\R^d;\R^n)}\leq \, \norm{u}_{L^2(\R^d;\R^n)}
\end{align*}
for all $u\in L^2(\R^d;\R^n)$. This proves $(iii)$. 
The properties $(i)$ and $(ii)$ are an immediate consequence of the structure of $\Pbb_{\Acal_\eps}$ together with $\eqref{def:projector_P}$. 

In order to show $(iv)$ let $\Qbb_{\Acal_\eps}(\xi) \in \Lin(\R^l;\R^n)$ with $\xi \in \R^d\setminus\{0\}$ and $\eps>0$ be given by
\[
  \Qbb_{\Acal_\eps}(\xi)v = \begin{cases}
    z-\Pbb_{\Acal_\eps}(\xi)z  &\text{for $v\in \range \Abb_{\Acal_\eps}(\xi)$ with
      $v=\Abb_{\Acal_\eps}(\xi)z$, $z\in \R^n$}, \\
    0 & \text{for } v\in \bigl(\range \Abb_{\Acal_\eps}(\xi)\bigr)^\perp.
  \end{cases}
\]
Then, $\Qbb_{\Acal_\eps}:\R^d\setminus\{0\}\to \Lin(\R^l;\R^n)$ is homogeneous of degree $-1$ and smooth. 
Notice that the smoothness of both $\Qbb_{\Acal_\eps}$ and $\Pbb_{\Acal_\eps}$ 
rests upon the constant-rank property of $\Acal_\eps$ (compare \cite[Proposition~2.7]{FM99}). Besides,
$\Qbb_{\Acal_\eps}(\frarg /\abs{\frarg})$ is bounded in the $L^\infty$-norm uniformly with respect to $\eps$. 
This can be seen as follows. Since $\Abb_{\Acal_\eps}(\eta)=\Abb_\Acal(\eta_\eps)$ for all $\eta\in \R^d$ with 
$\eta_\eps:=(\eta_1,\ldots, \eta_{d-1}, 1/\eps\,\eta_d)^T$, 
we may argue for $\eps\in (0,1)$ that
\begin{align*}
& \sup_{\eta\in \Sbb^{d-1}}\norm{\Qbb_{\Acal_\eps} (\eta)}_{\Lin(\R^l;\R^n)} 
= \sup_{\eta\in \Sbb^{d-1}} \norm{\Qbb_\Acal(\eta_\eps)}_{\Lin(\R^l;\R^n)}\\
&\qquad\leq \sup_{\xi\in \R^d, \abs{\xi}\geq 1} \norm{\Qbb_\Acal (\xi)}_{\Lin(\R^l;\R^n)}
= \sup_{\eta\in \Sbb^{d-1}, \alpha\geq 1} \alpha^{-1} \norm{\Qbb_\Acal(\eta)}_{\Lin(\R^l;\R^n)}\\
& \qquad\leq\sup_{\eta\in \Sbb^{d-1}}\norm{\Qbb_\Acal (\eta)}_{\Lin(\R^l;\R^n)} <\infty,
 \end{align*}
where the second equality is a consequence of the $(-1)$-homogeneity of $\Qbb_\Acal$. The final estimate 
results from $\Qbb_\Acal$ being smooth on the unit sphere $\Sbb^{d-1}$, which is a compact subset of $\R^d$.

Consequently, for $w_\eps\in L^2(\R^d;\R^l)$ given through $\Fcal w_\eps= \abs{\frarg}^{-1} \Abb_{\Acal_\eps}(\frarg) \Fcal u$ it holds that
\begin{align}\label{projection_estimate}
\normb{\Fcal^{-1}\bigl(\Qbb_{\Acal_\eps}(\frarg /\abs{\frarg})\Fcal w_\eps\bigr)}_{L^2(\R^d;\R^n)} &\leq C\, \norm{w_\eps}_{L^2(\R^d;\R^l)}\\
&\leq C\, \normb{\Acal_\eps u}_{W^{-1,2}(\R^d;\R^l)}.\nonumber
\end{align}
The last inequality holds by the definition of the $W^{-1,2}$-norm. 
Using the properties of $\Qbb_{\Acal_\eps}, \Pbb_{\Acal_\eps}$ and the linearity of $\Abb_{\Acal_\eps}$ yields
\begin{align*}
\Qbb_{\Acal_\eps}(\frarg /\abs{\frarg})\Fcal w_\eps&
=\Qbb_{\Acal_\eps}(\frarg /\abs{\frarg})\Abb_{\Acal_\eps}(\frarg /\abs{\frarg})\Fcal u \\&= \Fcal u-\Pbb_{\Acal_\eps}(\frarg/\abs{\frarg})\Fcal u=\Fcal u -\Pbb_{\Acal_\eps}(\frarg)\Fcal u.
\end{align*}
In view of $\eqref{projection_estimate}$ and $\eqref{def:projector_P}$ this proves $(iv)$. 
\end{proof}

\begin{remark}
For $p\in(1, \infty)$ the statement of Lemma~\ref{theo:projection} is still true.
However, the line of reasoning is more involved due to the fact that the Fourier inversion formula does not hold in general. 
In analogy to the proof of \cite[Theorem~2.7]{KR11} (with Fourier series replaced by Fourier transforms, where necessary), 
this difficulty can be overcome by using approximation via smooth functions in conjunction with
Mihlin's Multiplier Theorem (see for instance \cite[Theorem~5.2.7]{Gra08}) and a scaling argument for Fourier multipliers.
\end{remark}

As an essential tool towards the construction of a recovery sequence, we show the following analog of \cite[Proposition~4.1]{KR11} 
within the setting of functions defined on the whole space. Naturally, an extension property in the sense of
\cite[Assumption A3]{KR11} is not needed. The proof follows closely along the lines of \cite{KR11}, 
but is substantially easier, since one of the two relevant terms is forced to vanish here.

\begin{proposition}\label{prop:recov_1}
Let $\Acal$ be a constant-rank operator as in $\eqref{operator_A}$ such that the number of non-zero rows of the matrix $A^{(d)}$ is 
equal to the rank of $A^{(d)}$. Further suppose $u \in L^2(\R^d;\R^n)$ satisfies $\Acal_0 u = 0$ in $\R^d$, where 
\begin{align*}
 \Acal_0  &:= \left( \left\{ \begin{aligned}
      &[A^{(d)}]^i \partial_d,               && \text{if $[A^{(d)}]^i \neq 0$,} \\
      &\sum_{k=1}^{d-1} [A^{(k)}]^i \partial_k,  && \text{if $[A^{(d)}]^i = 0$}
    \end{aligned} \right\} \right)^{i = 1,\ldots,l}\nonumber
\end{align*}
and $[A^{(k)}]^i$ denotes the $i$th row of $A^{(k)}$.
Then there exists a sequence $(u_\eps)_{\eps } \subset L^2(\R^d;\R^n)$ with 
$\Acal_{\eps} u_\eps = 0$ in $\R^d$ and $u_\eps \to u$ in $L^2(\R^d;\R^n)$ for $\eps\to 0^+$.
\end{proposition}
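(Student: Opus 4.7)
The natural candidate is $u_\eps := \Pcal_{\Acal_\eps} u$, constructed via the projectors of Lemma~\ref{theo:projection}. Property~$(ii)$ gives $\Acal_\eps u_\eps = 0$ in $\R^d$ automatically, and~$(iii)$ supplies the uniform bound $\|u_\eps\|_{L^2(\R^d;\R^n)} \le C \|u\|_{L^2(\R^d;\R^n)}$, so the whole task reduces to verifying $u_\eps \to u$ strongly in $L^2(\R^d;\R^n)$. A first attempt via the quantitative estimate~$(iv)$ turns out to fail here: the hypothesis $\Acal_0 u = 0$ annihilates the $\tfrac{1}{\eps}[A^{(d)}]^i \partial_d u$ contribution in the rows with $[A^{(d)}]^i \neq 0$, but the horizontal derivatives $\sum_{k<d}[A^{(k)}]^i \partial_k u$ in those same rows remain intact, and $\|\Acal_\eps u\|_{W^{-1,2}}$ is therefore only bounded, not vanishing, as $\eps \to 0^+$.

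The cleaner route---and the simplification alluded to in the excerpt---is to read the convergence off directly from the Fourier representation
\begin{equation*}
\|u_\eps - u\|_{L^2(\R^d;\R^n)}^2 = \int_{\R^d} \bigl|\bigl(\Pbb_{\Acal_\eps}(\xi) - \mathrm{Id}\bigr) \hat u(\xi)\bigr|^2 \dd\xi.
\end{equation*}
The constant-rank condition, via \cite[Lemma~2.2]{KR11} applied to $\Acal_\eps$, guarantees that $\Pbb_\Acal$ is $0$-homogeneous and continuous on $\R^d\setminus\{0\}$, and
\begin{equation*}
\frac{\xi_\eps}{|\xi_\eps|} = \frac{(\xi_1,\ldots,\xi_{d-1},\xi_d/\eps)}{\sqrt{|\xi'|^2 + \xi_d^2/\eps^2}} \longrightarrow \mathrm{sign}(\xi_d)\, e_d \qquad \text{as } \eps \to 0^+
\end{equation*}
for every $\xi$ with $\xi_d \neq 0$, where $e_d$ denotes the $d$-th standard basis vector. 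Consequently $\Pbb_{\Acal_\eps}(\xi) = \Pbb_\Acal(\xi_\eps/|\xi_\eps|) \to \Pbb_\Acal(e_d)$, which is the orthogonal projection onto $\ker A^{(d)}$. On the Fourier side the constraint $\Acal_0 u = 0$ reads $\xi_d [A^{(d)}]^i \hat u(\xi) = 0$ for every row with $[A^{(d)}]^i \neq 0$, so $\hat u(\xi) \in \ker A^{(d)}$ for a.e.\ $\xi$ in $\{\xi_d \neq 0\}$. Combining the two yields $\Pbb_{\Acal_\eps}(\xi) \hat u(\xi) \to \hat u(\xi)$ pointwise a.e.\ in $\R^d$, and since the integrand is dominated by $4|\hat u(\xi)|^2 \in L^1(\R^d)$, the Dominated Convergence Theorem closes the argument.

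The only step I expect to require real care is the rigorous Fourier-side identification: passing from the distributional identity $\xi_d [A^{(d)}]^i \hat u = 0$ to the pointwise-a.e.\ statement $\hat u(\xi) \in \ker [A^{(d)}]^i$ on $\{\xi_d \neq 0\}$, and then assembling these row-by-row conditions into $\hat u(\xi) \in \ker A^{(d)}$---this last step is where the hypothesis that the number of non-zero rows of $A^{(d)}$ matches its rank enters naturally, since it ensures the conditions cut out exactly $\ker A^{(d)}$ without redundancy. Everything else is bookkeeping: the constant-rank property supplies the continuity and $0$-homogeneity of $\Pbb_\Acal$ that drives the pointwise limit, and the $L^2$-domination is immediate from $\Pbb_{\Acal_\eps}(\xi)$ being an orthogonal projection of operator norm at most one.
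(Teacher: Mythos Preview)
Your proof is correct and follows essentially the same route as the paper: set $u_\eps = \Pcal_{\Acal_\eps} u$, pass to the Fourier side, and close with dominated convergence. The paper first splits $u=u^{(1)}+u^{(2)}$ according to $\xi_d\neq 0$ versus $\xi_d=0$ and observes $u^{(2)}\equiv 0$ (since $\{\xi_d=0\}$ is null for $L^2$-functions on $\R^d$), then invokes the black-box Lemma~\ref{lem:P_eps_conv} for the pointwise limit $\Pbb_{\Acal_\eps}(\xi)\to\tilde\Pbb_0(\xi)$; you instead argue directly that $\xi_\eps/|\xi_\eps|\to \mathrm{sign}(\xi_d)\,e_d$ and use the continuity and $0$-homogeneity of $\Pbb_\Acal$ to obtain $\Pbb_{\Acal_\eps}(\xi)\to\Pbb_\Acal(e_d)$. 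This is a clean shortcut.

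One small correction: the row hypothesis on $A^{(d)}$ is \emph{not} actually used in your argument. The assembly step you flag is automatic---once $[A^{(d)}]^i\hat u(\xi)=0$ for every non-zero row $i$, the zero rows contribute $0=0$ and $A^{(d)}\hat u(\xi)=0$ follows regardless of rank. The hypothesis is needed for Lemma~\ref{lem:P_eps_conv} to hold at \emph{all} $\xi$ (in particular on $\{\xi_d=0\}$), which matters in the torus setting of \cite{KR11} but is irrelevant here because that hyperplane has Lebesgue measure zero. So your direct argument in fact dispenses with the row assumption in the $\R^d$ case.
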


For the proof of this proposition the auxiliary ``symbol'' 
\begin{align*}
  \tilde{\Abb}_0(\xi) :=  \left( \left\{ \begin{aligned}
      &[A^{(d)}]^i \xi_d,              && \text{if $[A^{(d)}]^i\xi_d \neq 0$,} \\
      &\sum_{k=1}^{d-1} [A^{(k)}]^i \xi_k, && \text{if $[A^{(d)}]^i\xi_d = 0$}
    \end{aligned} \right\} \right)^{i = 1,\ldots,l},\qquad \xi \in \R^d,
\end{align*}
will be needed.
By $\tilde{\Pbb}_0(\xi)$ we denote the orthogonal projection onto $\ker \tilde{\Abb}_0(\xi)\subset \R^n$.
The symbol  $\Abb_{\Acal_0}$ coincides with $\tilde{\Abb}_0$ outside the hyperplane where $\xi_d = 0$, in formulas
\begin{align}\label{def:A_tilde}
\tilde{\Abb}_0(\xi)=\Abb_{\Acal_0}(\xi) \qquad \text{ for all $\xi\in \R^d$ with $\xi_d\neq 0$.}
\end{align}
Note that in contrast to $\Abb_{\Acal_0}$, the expression $\tilde{\Abb}_0$ is not the symbol of a 
constant-coefficient partial differential operator (see~\cite[Remark~4.2]{KR11}).

Actually, $\tilde{\Abb}_0$ turns out to characterize the limit behavior of the symbols $\Abb_{\Acal_\eps}$ as $\eps$ tends to zero. 
The exact result is formulated in the next lemma, which was proven in \cite{KR11} and is repeated here for the readers' convenience.

\begin{lemma}[{\cite[Lemma 4.3]{KR11}}\,] \label{lem:P_eps_conv}
If $\Acal$ meets the assumptions of Proposition~\ref{prop:recov_1}, the symbols $\Abb_{\Acal_\eps}$ converge to 
$\tilde{\Abb}_0$ as $\eps\to 0^+$ in the sense that $\Pbb_{\Acal_\eps}(\xi)v \to \tilde{\Pbb}_0(\xi)v$ for all $\xi \in \R^d$ and all $v\in \R^n$.
\end{lemma}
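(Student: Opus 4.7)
The plan is to argue pointwise in $\xi\in\R^d$ and to distinguish the cases $\xi_d=0$ and $\xi_d\neq 0$. When $\xi_d=0$ (including $\xi=0$) one has $\xi_\eps=\xi$, and every row of $\tilde{\Abb}_0(\xi)$ falls into the second branch in the definition, so $\Abb_{\Acal_\eps}(\xi)=\Abb_\Acal(\xi)=\tilde{\Abb}_0(\xi)$ for every $\eps>0$. Hence $\Pbb_{\Acal_\eps}(\xi)=\tilde{\Pbb}_0(\xi)$ identically, and nothing further is needed.

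For $\xi_d\neq 0$, the idea is to rescale $\Abb_{\Acal_\eps}(\xi)$ row-by-row so as to eliminate the $1/\eps$-blow-up without altering its kernel. Partition the row indices as $I:=\{i:[A^{(d)}]^i\neq 0\}$ and $J:=\{i:[A^{(d)}]^i=0\}$, and introduce the invertible diagonal matrix $D_\eps\in\R^{l\times l}$ with $D_\eps^{ii}=\eps$ for $i\in I$ and $D_\eps^{ii}=1$ for $i\in J$. Left multiplication by $D_\eps$ does not change the kernel, so $\Pbb_{\Acal_\eps}(\xi)$ is also the orthogonal projector onto $\ker(D_\eps\Abb_{\Acal_\eps}(\xi))$. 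A direct row-by-row computation then yields $D_\eps\Abb_{\Acal_\eps}(\xi)\to\tilde{\Abb}_0(\xi)$ as $\eps\to 0^+$: rows in $J$ are independent of $\eps$ and already equal the corresponding row of $\tilde{\Abb}_0(\xi)$, while for $i\in I$ the $i$-th row reads $\eps\sum_{k<d}[A^{(k)}]^i\xi_k+[A^{(d)}]^i\xi_d\to[A^{(d)}]^i\xi_d$.

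The main obstacle is verifying that $\rank\tilde{\Abb}_0(\xi)=\rank\Abb_{\Acal_\eps}(\xi)$, since only this allows one to upgrade matrix convergence to convergence of kernel projectors. Let $r:=\rank\Abb_\Acal$, which by $0$-homogeneity and the constant-rank property is common to all $\Abb_\Acal(\eta)$ with $\eta\neq 0$; evaluating at $\eta=e_d$ gives $r=\rank A^{(d)}$. By hypothesis $A^{(d)}$ has exactly $r$ non-zero rows, so these rows must be linearly independent, and the rows $\xi_d[A^{(d)}]^i$ of $\tilde{\Abb}_0(\xi)$ with $i\in I$ inherit this independence (as $\xi_d\neq 0$), yielding $\rank\tilde{\Abb}_0(\xi)\geq r$. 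The opposite inequality comes from the lower semicontinuity of rank, $\rank\tilde{\Abb}_0(\xi)\leq\liminf_\eps\rank(D_\eps\Abb_{\Acal_\eps}(\xi))=r$, so equality holds.

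Finally, since $D_\eps\Abb_{\Acal_\eps}(\xi)\to\tilde{\Abb}_0(\xi)$ and all of these matrices share the common rank $r$, continuity of the Moore--Penrose pseudoinverse on the set of matrices of fixed rank (or an elementary singular-value-decomposition argument) implies that the orthogonal projectors onto their kernels converge in operator norm. This yields $\Pbb_{\Acal_\eps}(\xi)v\to\tilde{\Pbb}_0(\xi)v$ for every $\xi\in\R^d$ and every $v\in\R^n$, as claimed.
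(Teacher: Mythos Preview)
The paper does not supply its own proof of this lemma; it is quoted verbatim from \cite[Lemma~4.3]{KR11} and left unproven here, so there is no in-paper argument to compare against.

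Your proof is correct. The case split on $\xi_d$ is the natural one, and the device of premultiplying by the invertible diagonal matrix $D_\eps$ is exactly the right way to tame the $1/\eps$ blow-up without disturbing kernels. The only genuinely nontrivial point is the rank equality $\rank\tilde{\Abb}_0(\xi)=r$ for $\xi_d\neq 0$, and you handle both directions cleanly: the hypothesis that the number of nonzero rows of $A^{(d)}$ equals $\rank A^{(d)}=r$ forces those rows to be linearly independent, giving the lower bound, while lower semicontinuity of rank under $D_\eps\Abb_{\Acal_\eps}(\xi)\to\tilde{\Abb}_0(\xi)$ gives the upper bound. Once the ranks match, continuity of the Moore--Penrose pseudoinverse (or equivalently of the kernel projector $I-M^+M$) on matrices of fixed rank closes the argument. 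One cosmetic remark: at $\xi=0$ both symbols vanish and both projectors are the identity on $\R^n$, so your inclusion of this point in the $\xi_d=0$ case is harmless even though the constant-rank hypothesis is only stated on $\Sbb^{d-1}$.
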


\begin{proof}[Proof of Proposition~\ref{prop:recov_1}]
The fact that $u$ is $\Acal_0$-free implies 
\begin{align}\label{eq:A0-free}
  \Abb_{\Acal_0}(\xi) \bigl(\Fcal u\bigr)(\xi) = 0  \qquad \text{for a.e.\ $\xi \in \R^d$.}
\end{align}
Now we split $\Fcal u$ into
\[
  \bigl(\Fcal u\bigr)^{(1)}(\xi) := \begin{cases}
                      \bigl(\Fcal u\bigr)(\xi)  & \text{if $\xi_d \neq 0$,} \\
                      0            & \text{if $\xi_d = 0$,}
                    \end{cases}
  \quad\text{and}\quad
  \bigl(\Fcal u\bigr)^{(2)}(\xi) := \begin{cases}
                      0           & \text{if $\xi_d \neq 0$,} \\
                      \bigl(\Fcal u\bigr)(\xi)  & \text{if $\xi_d = 0$,}
                    \end{cases}
\]
so that by means of Fourier inversion,
\[
  u = \Fcal^{-1}\bigl(\Fcal u\bigr)^{(1)}+ \Fcal^{-1}\bigl(\Fcal u\bigr)^{(2)}=: u^{(1)} + u^{(2)}.
\]
With this definition, $u^{(1)}$ and $u^{(2)}$ are $L^2(\R^d;\R^n)$-functions satisfying $\Fcal u^{(i)}=\bigl(\Fcal u\bigr)^{(i)}$ for $i=1,2$, and 
we may conclude from $\eqref{eq:A0-free}$ that
\begin{align*}
 \Acal_0 u^{(1)} = \Acal_0 u^{(2)} = 0 \qquad\text{in $\R^d$.}
\end{align*}
Turning to $u^{(2)}$ we observe that by construction $\partial_d u^{(2)} = 0$. In order to have the quadratic integrability of 
$u^{(2)}$ on $\R^d$ preserved it needs to hold that $u^{(2)}\equiv0$. Hence, for the proof of this proposition it will be enough 
to show the existence of an $\Acal_\eps$-free sequence $(u_\eps)_{\eps}$ with $u_\eps \to u^{(1)}$ in $L^2(\R^d;\R^n)$ as $\eps\to 0^+$.

For this purpose we set $u_\eps := \Pcal_{\Acal_\eps} u^{(1)}$ for every $\eps>0$ or, speaking in terms of Fourier transforms,
\[
  \Fcal u_\eps := \Pbb_{\Acal_\eps}(\frarg)\Fcal u^{(1)}
\]
with $\Pcal_{\Acal_\eps}$ and $\Pbb_{\Acal_\eps}$ as in Lemma~\ref{theo:projection}.
In view of $\eqref{def:A_tilde}$ and $\eqref{eq:A0-free}$ one finds for a.e.\ $\xi \in \R^d$ that
$\tilde{\Abb}_0(\xi) \bigl(\Fcal u^{(1)}\bigr)(\xi)= 0$,
which implies 
\[  \tilde{\Pbb}_0(\xi) \bigl(\Fcal u^{(1)}\bigr)(\xi)
  = \bigl(\Fcal u^{(1)}\bigr)(\xi).\]
Recalling that $\Fcal$ (and $\Fcal^{-1}$) are $L^2$-isometries we may argue that
\begin{align}\label{equation}
\norm{u_\eps-u^{(1)}}^2_{L^2(\R^d;\R^n)}&=\norm{\Fcal u_\eps-\Fcal u^{(1)}}^2_{L^2(\R^d;\R^n)}\nonumber\\
&=\int_{\R^d}\absb{\bigl(\Pbb_{\Acal_\eps}(\xi)-\tilde{\Pbb}_0(\xi)\bigr)\bigl(\Fcal u^{(1)}\bigr)(\xi)}^2\dd{\xi}.
\end{align}
At this point we apply Lemma~\ref{lem:P_eps_conv}.
This, together with the uniform boundedness of the projection operators $\Pbb_{\Acal_\eps}$, allows us to use 
Lebesgue's Dominated Convergence Theorem and we conclude that the right-hand side in $\eqref{equation}$ tends to zero for $\eps\to 0^+$.
\end{proof}

As mentioned at the beginning of this section, $\Acal^{\text{\rm mag}}$ is of the form $\eqref{operator_A}$ 
meeting the constant-rank condition and one can check easily that both the rank of $(A^{\text{\rm mag}})^{(3)}$ and its number of non-zero rows is 
three. Besides, a straightforward calculation shows that $\Acal^{\text{\rm mag}}_0$ as in the statement 
of Theorem~\ref{theo:mainresult_Gamma} corresponds to $\Acal^{\text{\rm mag}}$ in the sense of Proposition~\ref{prop:recov_1}.
Hence, the next corollary is an immediate consequence of the previous proposition.
\begin{corollary}\label{cor:recov_1}
For every $(m,h)\in \Ucal_0$ there exists a sequence 
\begin{align*}
(\hat{m}_\eps, \hat{h}_\eps)_\eps\subset L^2(\R^3;\R^3)\times L^2(\R^3;\R^3) 
\end{align*}
such that
$(\hat{m}_\eps, \hat{h}_\eps)\to (m,h)$ in $L^{2}(\R^3;\R^3)\times L^2(\R^3;\R^3)$ as $\eps\to 0^+$ and
\begin{align*}\Acal_\eps^{\text{\rm mag}} \Bigl(\begin{array}{c}
    \hat{m}_\eps \\
    \hat{h}_\eps 
\end{array}\Bigr)=0 \end{align*} 
for all $\eps>0$.
\end{corollary}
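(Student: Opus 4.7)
The plan is simply to verify that $\Acal^{\text{\rm mag}}$ fits the hypotheses of Proposition~\ref{prop:recov_1} and then invoke that proposition on the pair $u=(m,h)$, where $m$ is identified with its trivial extension by zero from $\Omega_1$ to $\R^3$ in accordance with the convention fixed in Section~\ref{sec:problem}. This is a matter of bookkeeping, and no substantive obstacle should arise; the only mildly subtle point is to ensure that the extension-by-zero convention is consistently used, so that the Fourier-multiplier machinery underlying Lemma~\ref{theo:projection} and Proposition~\ref{prop:recov_1} applies on all of $\R^3$.

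First I would recall that the constant-rank property of $\Acal^{\text{\rm mag}}$ has already been established in \eqref{constantrank}, so what remains is to inspect the matrix $(A^{\text{\rm mag}})^{(3)}$ of coefficients of $\partial_3$. Reading this off directly from $\Acal^{\text{\rm mag}}(m,h)^T = (\divergerg(m+h),\,\curl h)^T$, I see that $\partial_3$ enters exactly three of the four scalar rows (namely in $\partial_3(m_3+h_3)$, $-\partial_3 h_2$, and $\partial_3 h_1$), while the row $\partial_1 h_2 - \partial_2 h_1$ contains no $\partial_3$. The three non-zero rows of $(A^{\text{\rm mag}})^{(3)}$ are manifestly linearly independent, so its rank equals the number of its non-zero rows, both being three, as required.

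Next, I would compute the associated operator $\Acal_0$ by the recipe of Proposition~\ref{prop:recov_1}: for the three rows on which $[A^{(3)}]^i\neq 0$ one keeps only the $\partial_3$-terms, producing $\partial_3(m_3+h_3)$, $-\partial_3 h_2$ and $\partial_3 h_1$, whereas for the fourth row one keeps the horizontal terms, producing $\partial_1 h_2-\partial_2 h_1$. Matching this componentwise against the definition of $\Acal_0^{\text{\rm mag}}$ in Theorem~\ref{theo:mainresult_Gamma} confirms that the two operators coincide.

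Finally, given $(m,h)\in\Ucal_0$, the defining relation $\Acal_0^{\text{\rm mag}}(m,h)^T=0$ in $\R^3$ places $u=(m,h)\in L^2(\R^3;\R^6)$ in the kernel of $\Acal_0$. Proposition~\ref{prop:recov_1} therefore supplies a sequence $(\hat m_\eps,\hat h_\eps)_\eps\subset L^2(\R^3;\R^3)\times L^2(\R^3;\R^3)$ with $\Acal_\eps^{\text{\rm mag}}(\hat m_\eps,\hat h_\eps)^T=0$ in $\R^3$ for every $\eps>0$ and $(\hat m_\eps,\hat h_\eps)\to(m,h)$ in $L^2\times L^2$, which is precisely the assertion of the corollary.
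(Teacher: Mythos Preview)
Your proposal is correct and follows essentially the same approach as the paper: the paper simply remarks, just before stating the corollary, that $\Acal^{\text{\rm mag}}$ is constant-rank, that $(A^{\text{\rm mag}})^{(3)}$ has three non-zero rows of rank three, and that the operator $\Acal_0^{\text{\rm mag}}$ of Theorem~\ref{theo:mainresult_Gamma} coincides with the $\Acal_0$ associated to $\Acal^{\text{\rm mag}}$ via Proposition~\ref{prop:recov_1}, whence the corollary is immediate. Your write-up spells out these checks in slightly more detail (identifying the rows explicitly and noting the extension-by-zero convention), but the argument is the same.
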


Before we prove the upper bound, we establish the following relation between $\curl_\eps$- and $\divergerg_\eps$-free fields,
which is a type of Helmholtz decomposition for $L^2$-functions defined on $\R^3$.
\begin{lemma}\label{lem:projection_divcurl}
For every $\eps>0$ it holds that 
\begin{align*}
\Ical-\Pcal_{\curl_\eps}= \Pcal_{\divergerg_\eps},
\end{align*}
where $\Pcal_{\curl_\eps}, \Pcal_{\divergerg_\eps}:L^2(\R^3;\R^3)\to L^2(\R^3;\R^3)$ are projection operators as defined in 
Lemma~\ref{theo:projection} and $\Ical$ is the identity map on $L^2(\R^3;\R^3)$.
\end{lemma}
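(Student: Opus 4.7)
The plan is to exploit the Fourier-multiplier characterization \eqref{def:projector_P} of the projectors from Lemma~\ref{theo:projection} and to show that the symbol-level identity
\[
\Pbb_{\divergerg_\eps}(\xi) = \Ibb_{\R^3} - \Pbb_{\curl_\eps}(\xi)
\]
holds for a.e.\ $\xi \in \R^3$. Since $\Fcal$ and $\Fcal^{-1}$ are $L^2$-isometries, this pointwise identity in $\Lin(\R^3;\R^3)$ transfers directly to the operator identity $\Ical - \Pcal_{\curl_\eps} = \Pcal_{\divergerg_\eps}$ on $L^2(\R^3;\R^3)$ by applying $\Fcal^{-1}$ to both sides.

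The main step, then, is to compute and compare the two kernels appearing in the definition of the projectors. Recalling $\nabla_\eps = (\partial_1, \partial_2, \eps^{-1}\partial_3)^T$, set $\xi_\eps := (\xi_1, \xi_2, \eps^{-1}\xi_3)^T$ so that $\Abb_{\curl_\eps}(\xi)v = \xi_\eps \times v$ and $\Abb_{\divergerg_\eps}(\xi)v = \xi_\eps \cdot v$ for $v \in \R^3$. For any $\xi \in \R^3$ with $\xi_\eps \neq 0$ one then reads off
\[
\ker \Abb_{\curl_\eps}(\xi) = \Span\{\xi_\eps\}, \qquad \ker \Abb_{\divergerg_\eps}(\xi) = \bigl(\Span\{\xi_\eps\}\bigr)^\perp,
\]
so the two kernels are orthogonal complements of each other in $\R^3$. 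Consequently, the sum of the orthogonal projectors onto these complementary subspaces equals the identity on $\R^3$, which is exactly the claimed symbol-level identity.

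The exceptional set $\{\xi \in \R^3 : \xi_\eps = 0\} = \{\xi : \xi_1 = \xi_2 = \xi_3 = 0\} = \{0\}$ has Lebesgue measure zero and is therefore irrelevant for the Fourier-multiplier representation \eqref{def:projector_P}. No genuine obstacle is expected: everything reduces to the elementary linear-algebra fact that $\R^3 = \Span\{\xi_\eps\} \oplus (\Span\{\xi_\eps\})^\perp$, which makes the kernels of the $\curl$ and $\divergerg$ symbols automatically complementary in the orthogonal sense.
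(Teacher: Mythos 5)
Your proof is correct and follows essentially the same route as the paper: both reduce the claim to the symbol-level identity $\Ibb-\Pbb_{\curl_\eps}(\xi)=\Pbb_{\divergerg_\eps}(\xi)$ for $\xi\neq 0$ and verify it by observing that $\ker\Abb_{\curl_\eps}(\xi)=\Span\{\xi_\eps\}$ and $\ker\Abb_{\divergerg_\eps}(\xi)=\Span\{\xi_\eps\}^\perp$ are orthogonal complements. The paper merely writes out the projection formula on a basis $\{\xi_\eps^{\perp,1},\xi_\eps^{\perp,2}\}$ where you invoke the abstract complementarity fact; the content is identical.
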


\begin{proof}
In view of $\eqref{def:projector_P}$, the claim holds true if 
\begin{align*}
\Ibb-\Pbb_{\curl_\eps}(\xi)= \Pbb_{\divergerg_\eps}(\xi) \qquad \text{ for all $\xi\in \R^3\setminus\{0\}$}.
\end{align*}
with $\Ibb:\R^3\to\R^3$ the identity function. In the sequel we use the notation $\xi_\eps=(\xi_1, \xi_2, 1/\eps\,\xi_3)^T$ for $\xi\in \R^3$. 
Let $\xi\in \R^3\setminus\{0\}$ be fixed. Then 
\begin{align*}
 \ker \Abb_{\divergerg_\eps}(\xi)=\set{v\in \R^3}{\xi_\eps\cdot v =0}=\Span\{\xi_\eps^{\perp,1}, \xi_\eps^{\perp,2}\}
\end{align*}
with orthogonal unit vectors $\xi_\eps^{\perp,1}$ and $\xi_\eps^{\perp,2}$, while
\begin{align*}
 \ker \Abb_{\curl_\eps}(\xi)=\set{v\in \R^3}{\xi_\eps\times v =0}=\Span\{\xi_\eps/\abs{\xi_\eps}\}.
\end{align*}
Thus, for $v\in \R^3$ one may infer 
\begin{align*}
\bigl(\Ibb-\Pbb_{\curl_\eps}(\xi)\bigr)v= v-(v\cdot\xi_\eps)\xi_\eps/\abs{\xi_\eps}^2=(v\cdot\xi_\eps^{\perp,1})\xi_\eps^{\perp,1}+(v\cdot\xi_\eps^{\perp,2})\xi_\eps^{\perp,2}= \Pbb_{\divergerg_\eps}(\xi)v
\end{align*}
and the proof is complete.
\end{proof}

Let us point out that the functions $\hat{m}_\eps$ in Corollary~\ref{cor:recov_1} do not have $W^{1,2}$-regularity nor 
do they fulfill the required nonconvex constraint of local saturation. Consequently, $(\hat{m}_\eps, \hat{h}_\eps)_\eps$ 
fails to be a correct recovery sequence. In order to overcome this problem we make a construction based on the use of 
appropriate projection operators and prove the following proposition.

\begin{proposition}[Recovery sequence]
For every $(m,h)\in \Ucal_0$ there exists a sequence $(m_\eps, h_\eps)_\eps\subset \Ucal_\eps$ with 
$(m_\eps, h_\eps)\to (m,h)$ in $W^{1,2}(\Omega_1;\R^3)\times L^2(\R^3;\R^3)$ satisfying
\begin{align*}
 \lim_{\eps\to 0^+}\, F_\eps[m_\eps,h_\eps]= F_0[m,h].
\end{align*}
\end{proposition}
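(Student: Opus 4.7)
The plan is to set $m_\eps := m$ (viewed as a function on $\R^3$ by trivial extension) independent of $\eps$, and to construct $h_\eps$ by suitably correcting the sequence provided by Corollary~\ref{cor:recov_1} using the projection onto $\curl_\eps$-free fields. Since by hypothesis $\partial_3 m = 0$ in $\Omega_1$, $|m| = m_s$ in $\Omega_1$, and $m \in W^{1,2}(\Omega_1;\R^3)$, the map $m_\eps$ automatically satisfies the regularity and the nonconvex saturation constraint required by $\Ucal_\eps$. The remaining task is to find $h_\eps \to h$ in $L^2(\R^3;\R^3)$ such that $\curl_\eps h_\eps = 0$ and $\divergerg_\eps(m+h_\eps)=0$ in $\R^3$.

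By Corollary~\ref{cor:recov_1} there exists $(\hat{m}_\eps, \hat{h}_\eps) \to (m,h)$ in $L^2(\R^3;\R^3)\times L^2(\R^3;\R^3)$ with $\Acal^{\text{\rm mag}}_\eps(\hat{m}_\eps, \hat{h}_\eps) = 0$. The defect of the naive choice $h_\eps := \hat{h}_\eps$ is that $\divergerg_\eps(m + \hat{h}_\eps) = \divergerg_\eps(m - \hat{m}_\eps)$, generally nonzero. To fix this I would define
\begin{equation*}
h_\eps := \hat{h}_\eps + \Pcal_{\curl_\eps}(\hat{m}_\eps - m),
\end{equation*}
where $\Pcal_{\curl_\eps}$ is the projector onto $\curl_\eps$-free $L^2$-fields from Lemma~\ref{theo:projection}. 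The verification of the two PDE constraints is then immediate: $\curl_\eps h_\eps = \curl_\eps \hat{h}_\eps + 0 = 0$ by property $(ii)$ of Lemma~\ref{theo:projection}, while for the divergence one uses Lemma~\ref{lem:projection_divcurl} to write $\Pcal_{\curl_\eps}(\hat{m}_\eps - m) = (\hat{m}_\eps - m) - \Pcal_{\divergerg_\eps}(\hat{m}_\eps - m)$, so that
\begin{equation*}
\divergerg_\eps(m + h_\eps) = \divergerg_\eps(\hat{m}_\eps + \hat{h}_\eps) - \divergerg_\eps \Pcal_{\divergerg_\eps}(\hat{m}_\eps - m) = 0 - 0 = 0.
\end{equation*}

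For the strong $L^2$-convergence $h_\eps \to h$ I would split $h_\eps - h = (\hat{h}_\eps - h) + \Pcal_{\curl_\eps}(\hat{m}_\eps - m)$: the first summand vanishes by Corollary~\ref{cor:recov_1}, and the second is controlled in $L^2$-norm by $C\norm{\hat{m}_\eps - m}_{L^2} \to 0$ thanks to the $\eps$-uniform bound $(iii)$ of Lemma~\ref{theo:projection}. Together with $m_\eps = m$, this yields $(m_\eps, h_\eps) \to (m,h)$ even strongly in $W^{1,2}(\Omega_1;\R^3)\times L^2(\R^3;\R^3)$.

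It remains to pass to the limit in the energies. Since $\partial_3 m = 0$ in $\Omega_1$, the definition \eqref{def:nabla_eps} gives $\nabla_\eps m = (\nabla' m, 0)^T$ pointwise a.e.\ in $\Omega_1$, so the exchange contribution satisfies $\int_{\Omega_1}\alpha|\nabla_\eps m|^2\,dx = \int_{\Omega_1}\alpha|\nabla' m|^2\,dx$ for every $\eps$. The anisotropy term $\int_{\Omega_1}\varphi(m)\,dx$ is likewise independent of $\eps$, and the magnetic field term converges because $h_\eps \to h$ strongly in $L^2(\R^3;\R^3)$. Hence $\lim_{\eps\to 0^+}F_\eps[m_\eps, h_\eps] = F_0[m,h]$, completing the upper bound. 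The conceptual core, and the only nontrivial step, is the choice of correction via $\Pcal_{\curl_\eps}$: it simultaneously restores the $\eps$-rescaled divergence constraint without destroying $\curl_\eps$-freeness, and its $\eps$-uniform operator bound from Lemma~\ref{theo:projection} is precisely what allows the correction to vanish in the limit.
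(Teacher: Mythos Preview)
Your proof is correct and follows essentially the same approach as the paper: the recovery sequence you construct is in fact identical to the paper's choice $h_\eps = \Pcal_{\curl_\eps}(\hat{h}_\eps - m + \hat{m}_\eps)$, since $\hat{h}_\eps$ is already $\curl_\eps$-free and hence fixed by $\Pcal_{\curl_\eps}$. Your presentation is slightly more streamlined in that the convergence $h_\eps\to h$ only appeals to the uniform bound~(iii) of Lemma~\ref{theo:projection}, whereas the paper additionally routes through property~(iv) to handle the term $\Pcal_{\curl_\eps}\hat{h}_\eps-\hat{h}_\eps$.
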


\begin{proof}
For given $(m,h)\in \Ucal_0$ let $(\hat{m}_\eps, \hat{h}_\eps)_\eps$ be as in Corollary~\ref{cor:recov_1}. We set for $\eps>0$,
\begin{align*}
m_\eps & =m,\\
h_\eps & = \Pcal_{\curl_\eps}\bigl(\hat{h}_\eps-m+\hat{m}_\eps\bigr).
\end{align*}
The assertion is that this definition of $(m_\eps, h_\eps)_\eps$ provides a recovery sequence for $(m,h)$. 

Indeed, it holds that $\Acal_\eps^{\text{\rm mag}}(m_\eps, h_\eps)=0$ in $\R^3$ for every $\eps>0$, 
since $\curl_\eps h_\eps =0$ by Lemma~\ref{theo:projection}\,(ii) and 
\begin{align*}
\divergerg_\eps(m_\eps+ h_\eps) &= \divergerg_\eps\bigl(m + \Pcal_{\curl_\eps}(\hat{h}_\eps-m+\hat{m}_\eps)\bigr)\\ 
&=\divergerg_\eps(\hat{m}_\eps+ \hat{h}_\eps) + \divergerg_\eps\bigl((\Pcal_{\curl_\eps}- \Ical)(\hat{h}_\eps-m+\hat{m}_\eps)\bigr)=0
\end{align*}
in view of Lemma~\ref{lem:projection_divcurl} and Corollary~\ref{cor:recov_1}. Trivially, $m_\eps$ satisfies 
$\abs{m_\eps}=\abs{m}=m_s$ in $\Omega_1$ for all $\eps>0$.
Moreover, applying Lemma~\ref{theo:projection}\,(iii)-(iv) yields
\begin{align*}
&\norm{h_\eps-h}_{L^2(\R^3;\R^3)} \leq\norm{h_\eps-\hat{h}_\eps}_{L^2(\R^3;\R^3)}+\norm{\hat{h}_\eps-h}_{L^2(\R^3;\R^3)}\\
&\qquad\leq \norm{\Pcal_{\curl_\eps}\hat{h}_\eps-\hat{h}_\eps}_{L^2(\R^3;\R^3)} + \norm{\Pcal_{\curl_\eps}(\hat{m}_\eps-m)}_{L^2(\R^3;\R^3)} 
+\norm{\hat{h}_\eps-h}_{L^2(\R^3;\R^3)}\\
&\qquad\leq C\, \bigl(\norm{\curl_\eps \hat{h}_\eps}_{W^{-1,2}(\R^3;\R^3)} + \norm{\hat{m}_\eps-m}_{L^2(\R^3;\R^3)}\bigr) 
+ \norm{\hat{h}_\eps-h}_{L^2(\R^3;\R^3)}\\
&\qquad= C\,\norm{\hat{m}_\eps-m}_{L^2(\R^3;\R^3)} + \norm{\hat{h}_\eps-h}_{L^2(\R^3;\R^3)}.
\end{align*}
This expression tends to zero by Corollary~\ref{cor:recov_1} as $\eps\to 0^+$.

Summarizing, we find that $(m_\eps,h_\eps)\in \Ucal_\eps$ for all $\eps>0$ and 
\begin{align*}
&\lim_{\eps\to 0^+} \int_{\Omega_1} \alpha\, \abs{\nabla_\eps m_\eps}^2 + \varphi(m_\eps)\dd{x} + \frac{1}{2} \int_{\R^3}\abs{h_\eps}^2 \dd{x} \\
&\qquad\qquad\qquad\qquad\qquad\qquad\qquad\qquad\quad=\int_{\Omega_1} \alpha\, \abs{\nabla' m}^2 
+ \varphi(m)\dd{x} + \frac{1}{2} \int_{\R^3}\abs{h}^2 \dd{x}.
\end{align*}
\end{proof}


\section{Comparison with the results by Gioia and James}\label{sec:comparisonGJ97}
To demonstrate the agreement between Theorem~\ref{theo:mainresult_Gamma} and the result obtained in \cite[Theorem~4.1]{GJ97} we will prove that the 
$\Gamma$-limit of $F_\eps$ as $\eps\to 0^+$ can be expressed equivalently in the form
\begin{align*}
\tilde{F}_0[m, h]=\left\{\begin{array}{cl}\displaystyle\int_{\omega} \alpha\, \abs{\nabla m}^2 
+ \varphi(m)+ \frac{1}{2} m_3^2\dd{x}, &\text{if $(m,h)\in \tilde{\Ucal}_0$},\\
\infty, & \text{otherwise\,,}\end{array}\right.
\end{align*}
where
\begin{align*}
\tilde{\Ucal}_0=\set{(m, h)\in W^{1,2}(\omega;\R^3)\times L^2(\R^2;\R^3)}{\abs{m}=m_s \text{ in $\omega$},\; 
h=-(0,\,0, m_3)^T}.
\end{align*}
Indeed, if we identify $m$ and $h$ with their constant extensions in space direction $x_3$, it is sufficient to show that
\begin{align*}
\Ucal_0=\tilde{\Ucal}_0.
\end{align*}
This equality results essentially from the observation that the only function that is constant with respect to one of the coordinate directions 
and at the same time quadratically integrable on the whole space is the zero mapping. So for $(m,h)\in \Ucal_0$ one finds 
$\partial_3 h_2=\partial_3 h_1=0$ in $\R^3$ and $\partial_3 h_3=0$ in $\R^3\setminus \Omega_1$, which results in $h_1\equiv h_2\equiv 0$ and 
$h_3\equiv0$ in $\R^3\setminus\Omega_1$, respectively. Applying the same argument once again to $\partial_3\bigl((m_3+h_3)\chi_{\Omega_1}\bigr)=0$ in 
$\R^3$, where $\chi_{\Omega_1}$ is the indicator function of $\Omega_1$, yields $h_3=-m_3\chi_{\Omega_1}=-m_3$. Finally, since $m$ is independent 
of the $x_3$-variable, $(m,h)$ is actually two-dimensional and we conclude $(m,h)\in \tilde{\Ucal}_0$. The reverse inclusion follows simply from the 
above mentioned identification and a straightforward calculation.

\begin{remark}
a) In view of its representation $\tilde{F}_0$, the $\Gamma$-limit of $F_\eps$ is purely two-dimensional. The third additive term in the 
density of $\tilde{F}_0$ indicates that magnetizations pointing out of plane are penalized and hence less favorable when it comes 
to energy minimization. Interestingly, we also observe that the limit functional $\tilde{F}_0$ can be viewed as local. In fact, the Maxwell equations 
disappear in the limit $\eps\to 0^+$, so that the asymptotic problem is free of any magnetostatic constraints of this form. Instead, the 
relation between $m$ and $h$ is ruled by the simple pointwise equality $h=-(0,0, m_3)^T$.
For a detailed interpretation of our $\Gamma$-convergence result with respect to physical and engineering applications we refer to \cite[Section~5]{GJ97}. 
There, Gioia and James analyze the qualitative effect of external fields and the practical scope of the theory regarding the thickness of 
films by giving precise estimates for some relevant materials. 

b) The main advantage of our approach is the convenient access to complete 
information about the induced field $h$, which is automatically included in $\tilde{F}_0$. In contrast, the reasoning in~\cite{GJ97} 
requires explicit solving of the magnetostatic equations together with a limit analysis for the solutions as tends to zero to gain the same insight.

c) The scaling of the micromagnetic free energy in this work rests fundamentally on the assumption
that the material parameter $\alpha$ does not depend on the thickness $\eps$. 
We refer to \cite{DKMO_Review} for a review paper discussing a number of different reduction regimes. Here the characteristic length 
scale $d$ ($\simeq \sqrt{\alpha}$) of the magnetic material is related to $\eps$ and $l$ (the length scale of the cross section) in
such a way that $d^2/l\eps \to 0$ and $\eps/l\to 0$. In \cite{DKMO02}, for instance, the authors study the limit behavior in
the regime $l\eps/d^2 \gg \ln(l/\eps)$ and are able to capture domain structures observed in large thin films,
as comparison with experimental data underlines. 
Let us point out once again that the results obtained in \cite{GJ97}
and recovered here are physically relevant only for thin-film samples of sufficiently small lateral expansion. In this case one observes 
a single region of uniform magnetization.
\end{remark}


\section*{Acknowledgments}
\sloppypar 
I am grateful to Irene Fonseca for pointing me to this topic, for valuable conversations on the subject and 
for reading carefully a first draft of the manuscript. Also, my thanks go to Filip Rindler, who contributed with 
useful ideas on closely related issues. This research was carried out during a one-year stay at Carnegie Mellon 
University funded by the Funda\c{c}\~{a}o para a Ci\^{e}ncia e a Tecnologia (FCT) through the ICTI CMU--Portugal 
program and UTA-CMU/MAT/0005/2009.


\bibliographystyle{amsplain}

\end{document}